\documentclass[12pt]{amsart}
\usepackage{amsmath,amssymb,amsthm}
\usepackage{needspace}
\usepackage[pdftex,
  pdftitle={Proofs of two conjectures of Berkovich and Dhar on hook lengths},
  pdfsubject={MATHEMATICS},
  pdfkeywords={partition inequalities, hook lengths, Gaussian polynomials},
  pdfproducer={LaTeX with hyperref},
  pdfcreator={pdflatex}]{hyperref}
\newtheorem{theorem}{Theorem}[section]
\newtheorem{corollary}[theorem]{Corollary}
\newtheorem{conjecture}[theorem]{Conjecture}
\newcommand{\qbinom}[3]{\genfrac{[}{]}{0pt}{}{#1}{#2}_{#3}}
\numberwithin{equation}{section}
\allowdisplaybreaks

\begin{document}
\title[Two conjectures of Berkovich and Dhar]
{Proofs of two conjectures of Berkovich and Dhar on hook lengths}
\author{Rong Chen}
\address{Department of Mathematics, Shanghai Normal University, People's Republic of China}
\email{rchen@shnu.edu.cn}
\date{\today}
\subjclass[2020]{Primary 05A17, 05A20; Secondary 11P81}
\keywords{Partition inequalities, hook length, Gaussian polynomials}

\begin{abstract}
We prove stronger forms of two conjectures of Berkovich and Dhar
concerning a coefficientwise inequality and pairs of hooks of length
two in bounded partitions.  The coefficientwise inequality implies
the corresponding inequality for hook pairs.  In particular, for each
fixed size and common bound on the largest part, the total number
of such pairs in odd partitions is at least that in distinct
partitions.  We give combinatorial proofs by explicit weight-preserving
injections based on Glaisher's bijection and analytic proofs using
finite product identities and known hook generating functions.
\end{abstract}
\maketitle

\section{Introduction and main results}

The hook length of a cell in the Young diagram of a partition is one
plus the number of cells to its right in the same row and below it
in the same column.  We consider unordered pairs of distinct cells,
both of hook length two.  Thus a partition with $m$ hooks of length
two contributes $\binom{m}{2}$ pairs.

Ballantine, Burson, Craig, Folsom, and Wen
\cite[Proposition~3.3]{BBCFW} proved that
the total number of hooks of length two in all odd partitions of $n$
is at least the corresponding number in all distinct partitions of $n$.
Berkovich and Dhar \cite[Theorem~2.3]{BD} obtained a finite analog
with odd parts at most $2L-1$ and distinct parts at most $L$.
They also obtained generating functions for pairs of hooks of length
two \cite[Theorems~2.4 and~2.5]{BD}.  For these pairs, they proved
the inequality without bounds on the largest parts
\cite[Theorem~2.9]{BD} and conjectured a finite analog
\cite[Conjecture~2.6]{BD}.

In their concluding remarks, Berkovich and Dhar stated a
coefficientwise inequality \cite[Conjecture~4.1]{BD} which implies
their conjecture for pairs of hooks, and asked for an injective
proof.  We prove stronger forms of both conjectures.  In the
inequality for pairs of hooks, the bound on the distinct parts
is increased from $L$ to $2L$, while the bound on the odd parts
remains $2L-1$.  We give combinatorial proofs based on Glaisher's
bijection and analytic proofs using finite product identities
and the generating functions of Berkovich and Dhar.

For $L\geq1$, let $a_2(L,m,n)$ denote the number of partitions of $n$ into odd parts at most $2L-1$
having $m$ hooks of length two, and let $b_2(L,m,n)$ denote the number of
partitions of $n$ into distinct parts at most $L$ having
$m$ such hooks.  We write $|\pi|$ for the sum of the parts of a
partition $\pi$.  The $q$-Pochhammer symbol and the $q$-binomial
(Gaussian) coefficient are defined by
$$
 (a;q)_L=\prod_{j=0}^{L-1}(1-aq^j),\qquad
 \qbinom{m}{n}{q}=\frac{(q;q)_m}{(q;q)_n(q;q)_{m-n}}
 \quad(m\geq n\geq0),
$$
with $(a;q)_0=1$ and the Gaussian polynomial equal to zero
otherwise.  Empty sums are zero and empty products are one.
For formal power series, $A(q)\succeq B(q)$ means
that every coefficient of $A(q)-B(q)$ is nonnegative.

We first state the coefficientwise conjecture of Berkovich and Dhar.
We take $L\geq4$, so that all finite products have nonnegative lengths.

\Needspace{12\baselineskip}
\begin{conjecture}[{\cite[Conjecture~4.1]{BD}}]\label{c1}
For every integer $L\geq4$,
\begin{equation}\label{e1}
 \frac{1}{(q;q^2)_L}
 \left(\frac{1-q^{2L-2}}{1-q^2}
             +q^3\qbinom{L-2}{2}{q^2}\right)
 \succeq q\qbinom{L-2}{2}{q}(-q^3;q)_{L-4}.
\end{equation}
\end{conjecture}

The following theorem strengthens this conjecture.

\begin{theorem}\label{t2}
For every integer $L\geq2$,
\begin{equation}\label{e2}
 \frac{1}{(q;q^2)_{L-1}}
 \left(\frac{1-q^{2L-2}}{1-q^2}
             +q^3\qbinom{L-1}{2}{q^2}\right)
 \succeq q\qbinom{2L-2}{2}{q}(-q^3;q)_{2L-4}.
\end{equation}
\end{theorem}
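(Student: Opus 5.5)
The plan is to reduce \eqref{e2} to a statement about a single polynomial by pulling the factor $1/(q;q^2)_{L-1}$ out of the right-hand side as well; this is where Glaisher's bijection enters. Since $(-q;q)_{2L-2}=(q^2;q^2)_{2L-2}/(q;q)_{2L-2}=(q^{2L};q^2)_{L-1}/(q;q^2)_{L-1}$, and
$$(-q^3;q)_{2L-4}=\frac{(-q;q)_{2L-2}}{(1+q)(1+q^2)},$$
we can write
$$
q\qbinom{2L-2}{2}{q}(-q^3;q)_{2L-4}
=\frac{1}{(q;q^2)_{L-1}}\cdot
\frac{q(1-q^{2L-2})(1-q^{2L-3})\,(q^{2L};q^2)_{L-1}}{(1-q)(1-q^2)(1+q)(1+q^2)} .
$$
Combinatorially, this is the statement that distinct partitions with parts at most $2L-2$ correspond, via Glaisher (split $2^k o$ into $2^k$ copies of $o$), to odd partitions with parts at most $2L-3$ whose multiplicities satisfy the constraints encoded by $(q^{2L};q^2)_{L-1}$. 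So \eqref{e2} becomes
$$\frac{1}{(q;q^2)_{L-1}}\bigl(A_L(q)-B_L(q)\bigr)\succeq 0,$$
where $A_L$ is the bracket on the left of \eqref{e2} and $B_L$ is the rational factor displayed above.

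Next I would analyse $A_L-B_L$. First I will check that $B_L$ is in fact a polynomial. This holds because $(q;q^2)_{L-1}$ times the right side of \eqref{e2} is a polynomial; the factors $1-q^{2L-2}$ and $1-q^{2L-3}$ together with the $(q^{2L};q^2)_{L-1}$ terms cancel the denominator. The difference $A_L-B_L$ will typically have some negative coefficients, so nonnegativity must come from multiplying by $1/(q;q^2)_{L-1}$. My approach is to write $A_L-B_L=P_L(q)-N_L(q)$ with $P_L,N_L\succeq0$. I then want to show that each negative monomial $q^{s}$ in $N_L$ can be matched to a positive one $q^{s-(2i-1)}$ in $P_L$, where $2i-1\le 2L-3$ is an odd part, with the matching injective. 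Since $1/(1-q^{2i-1})$ divides $1/(q;q^2)_{L-1}$, this matching yields nonnegativity. I would first compute $A_L-B_L$ for small $L$ ($L=2,3,4,5$) to guess the shape of $N_L$. Then I would prove the decomposition by induction on $L$, using $\qbinom{L}{2}{q^2}=\qbinom{L-1}{2}{q^2}+q^{2L-4}\qbinom{L-1}{1}{q^2}$ and the analogous Pascal recurrence for $\qbinom{2L-2}{2}{q}$.

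In parallel, I would set up the combinatorial version, since the paper promises an injection. The map starts from a pair $(\mu,\delta)$, where $\mu$ is a partition into at most two parts at most $2L-4$ (shifted by $q$) and $\delta$ is distinct with parts in $[3,2L-2]$. Glaisher sends $\delta$ to an odd partition $\omega$ with parts at most $2L-3$ in which the parts $1$ and $2$ of the distinct side never occur. Consequently the number of $1$'s in $\omega$ is a multiple of $4$, and $\omega$ has restricted small multiplicities. The remaining weight $1+|\mu|$ must then be encoded either as an extra $q^{2j}$ with $j\le L-2$, or as a $q^3$ times two parts in $\{0,\dots,L-3\}$ doubled. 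This is done by adding $1$'s, or small odd parts, to $\omega$ in ways that can be decoded using the forbidden residues.

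The main obstacle is this last encoding step. The factor $q\qbinom{2L-2}{2}{q}$ has roughly $L$ times more mass than $\sum_{j}q^{2j}+q^3\qbinom{L-1}{2}{q^2}$, so the surplus has to be absorbed by the freedom that the missing parts $1,2$ create in the multiplicities of $\omega$. Making the decoding unambiguous, especially for odd versus even $|\mu|$ and at the boundary $|\mu|$ near $4L-8$, is where I expect the case analysis to be delicate. If the injection resists, I would fall back on the analytic matching argument described in the second paragraph.
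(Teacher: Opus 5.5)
\textbf{There is a genuine gap.} Neither of your two routes is carried out, and the analytic route is set up so that the key step becomes hard rather than easy.

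Multiplying through by the full factor $(q;q^2)_{L-1}$ leaves you needing $\frac{1}{(q;q^2)_{L-1}}(A_L-B_L)\succeq0$. Here $A_L-B_L$ is a polynomial of degree $3L^2-L-8$ with many sign changes, all coming from the factor $(q^{2L};q^2)_{L-1}$ you kept inside $B_L$. Your single-shift matching criterion (pair each negative monomial with a positive one an odd part lower) is only a sufficient condition. You give no argument that it holds for every $L$, and ``compute small cases, guess $N_L$, then induct'' is not a proof.

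Likewise, your combinatorial paragraph names the right ingredients but leaves the encoding unspecified, and the encoding is the whole content. Your heuristic that the right side has ``roughly $L$ times more mass'' is also off. At $q=1$ the ratio of $\qbinom{2L-2}{2}{q}$ to the bracket $A_L$ is $(L-1)(2L-3)\big/\tfrac{(L-1)L}{2}=4-6/L<4$.

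That $4$, the value of $(1+q)(1+q^2)$ at $q=1$, is the missing idea. You already wrote $(-q;q)_{2L-2}=(q^{2L};q^2)_{L-1}/(q;q^2)_{L-1}$; use it the other way. Since $1/(q^{2L};q^2)_{L-1}$ has nonnegative coefficients and constant term $1$,
\[
\frac{1}{(q;q^2)_{L-1}}\succeq(-q;q)_{2L-2}=(1+q)(1+q^2)(-q^3;q)_{2L-4}.
\]
This discards $(q^{2L};q^2)_{L-1}$ instead of carrying it along. Then \eqref{e2} reduces to the polynomial inequality $(1+q)(1+q^2)A_L\succeq q\qbinom{2L-2}{2}{q}$. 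Substituting the product formulas shows the difference equals $\frac{1-q^{2L-2}}{1-q^2}(1+q^3+q^{2L-2})$, which is visibly nonnegative.

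Combinatorially, the factor $(1+q)(1+q^2)$ is the freedom to adjoin a part $1$ and/or a part $2$ to the distinct partition $\mu$ (whose parts are $\geq3$) before applying Glaisher's map $g$. Write $\qbinom{2L-2}{2}{q}=\sum_{0\le a\le b\le2L-4}q^{a+b}$. The map is:
\begin{itemize}
\item[(i)] $(\mu,0,2i)\mapsto(g(\mu\cup(1)),i)$;
\item[(ii)] $(\mu,0,2i+1)\mapsto(g(\mu\cup(2)),i)$;
\item[(iii)] for $a>0$, write $a=2c+1+f$ and $b=2d+1+f+e$ with $e,f\in\{0,1\}$, and send $(\mu,a,b)\mapsto(g(\mu\cup(1^e,2^f)),c,d)$.
\end{itemize}
Injectivity is immediate because $\mu$ has no parts $1$ or $2$. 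The two bits $e,f$ absorb exactly the parity ambiguities you flagged as the delicate part.
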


\begin{corollary}\label{c3}
Conjecture~\ref{c1} holds.
\end{corollary}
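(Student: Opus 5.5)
We obtain Conjecture~\ref{c1} from Theorem~\ref{t2} applied with $L-1$ in place of $L$. The plan is a chain of three coefficientwise inequalities, using that $\succeq$ is transitive and that multiplying both sides of $A\succeq B$ by a series with nonnegative coefficients preserves the inequality. Fix $L\geq4$, so that $L-1\geq3\geq2$ and Theorem~\ref{t2} applies to $L-1$. This gives
\begin{equation*}
 \frac{1}{(q;q^2)_{L-2}}
 \left(\frac{1-q^{2L-4}}{1-q^2}
             +q^3\qbinom{L-2}{2}{q^2}\right)
 \succeq q\qbinom{2L-4}{2}{q}(-q^3;q)_{2L-6}.
\end{equation*}

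\emph{Left sides.} We compare the left side of \eqref{e1} with the left side of this display. First,
\[
\frac{1}{(q;q^2)_L}=\frac{1}{(q;q^2)_{L-2}}\cdot\frac{1}{(1-q^{2L-3})(1-q^{2L-1})},
\]
and the second factor is $\succeq 1$ with nonnegative coefficients. Second, the bracketed term in \eqref{e1} dominates the bracketed term above, because
\[
\frac{1-q^{2L-2}}{1-q^2}=\sum_{j=0}^{L-2}q^{2j}\succeq\sum_{j=0}^{L-3}q^{2j}=\frac{1-q^{2L-4}}{1-q^2},
\]
while the $q^3\qbinom{L-2}{2}{q^2}$ terms coincide. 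All the series involved have nonnegative coefficients, so taking products shows that the left side of \eqref{e1} dominates the left side of the display.

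\emph{Right sides.} It remains to show
\[
q\qbinom{2L-4}{2}{q}(-q^3;q)_{2L-6}\succeq q\qbinom{L-2}{2}{q}(-q^3;q)_{L-4}.
\]
For the Gaussian polynomials, the recurrence $\qbinom{m+1}{k}{q}=\qbinom{m}{k}{q}+q^{m+1-k}\qbinom{m}{k-1}{q}$ shows that $\qbinom{m}{k}{q}$ is coefficientwise increasing in $m$, since all Gaussian polynomials have nonnegative coefficients. As $2L-4\geq L-2$, this gives $\qbinom{2L-4}{2}{q}\succeq\qbinom{L-2}{2}{q}$. For the products, $2L-6\geq L-4$ when $L\geq2$, so
\[
(-q^3;q)_{2L-6}=(-q^3;q)_{L-4}\prod_{j=L-1}^{2L-4}(1+q^j)\succeq(-q^3;q)_{L-4}.
\]
Multiplying these nonnegative-coefficient inequalities gives the required comparison of right sides.

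Chaining the three inequalities yields \eqref{e1}. There is no real obstacle here. The one point that needs care is choosing to apply the theorem at $L-1$ rather than $L$: at $L$ itself, the binomial $\qbinom{L-1}{2}{q^2}$ on the left of \eqref{e2} is larger than the $\qbinom{L-2}{2}{q^2}$ in \eqref{e1}, so that comparison would go the wrong way.
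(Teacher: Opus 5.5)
Your proposal is correct and is essentially the paper's analytic proof of Corollary~\ref{c3}. Like the paper, you apply Theorem~\ref{t2} at $L-1$, dominate the left side using $(q;q^2)_{L-2}/(q;q^2)_L=1/\bigl((1-q^{2L-3})(1-q^{2L-1})\bigr)\succeq1$ together with the larger geometric sum, and dominate the right side using monotonicity of Gaussian polynomials in the upper index and $(-q^3;q)_{2L-6}/(-q^3;q)_{L-4}=(-q^{L-1};q)_{L-2}\succeq1$.
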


Berkovich and Dhar also conjectured the following inequality for
the weighted count of hooks of length two.

\begin{conjecture}[{\cite[Conjecture~2.6]{BD}}]\label{c4}
For all integers $L\geq1$ and $n\geq0$,
\begin{equation}
 \sum_{m\geq0}\binom{m}{2}a_2(L,m,n)
 \ \geq\
 \sum_{m\geq0}\binom{m}{2}b_2(L,m,n).
\end{equation}
\end{conjecture}

We prove the following stronger inequality.

\begin{theorem}\label{t5}
For all integers $L\geq1$ and $n\geq0$,
\begin{equation}\label{e4}
 \sum_{m\geq0}\binom{m}{2}a_2(L,m,n)
 \ \geq\
 \sum_{m\geq0}\binom{m}{2}b_2(2L,m,n).
\end{equation}
\end{theorem}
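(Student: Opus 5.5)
The plan is to deduce Theorem~\ref{t5} from Theorem~\ref{t2} by identifying both sides of \eqref{e4} as coefficients of $q^n$ in the generating functions of Berkovich and Dhar \cite[Theorems~2.4 and~2.5]{BD}. Write
$$A_L(q)=\sum_{n\geq0}\sum_{m\geq0}\binom{m}{2}a_2(L,m,n)q^n,
\qquad B_L(q)=\sum_{n\geq0}\sum_{m\geq0}\binom{m}{2}b_2(L,m,n)q^n .$$
The first step is to recall, or rederive, closed forms for these series.

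Each cell of hook length two sits at the end of a row whose part exceeds the next part by at least two, or at the bottom of a column where a part value is repeated. This lets one count, for each partition, its hooks of length two.

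For odd partitions with parts at most $2L-1$, consecutive distinct part sizes always differ by at least two. Hence the hooks of length two are attached to distinct part sizes and to repetitions. Counting pairs of them amounts to choosing two ``marked'' positions among the admissible part sizes. Factoring out the free part gives an expression of the shape $\frac{1}{(q;q^2)_{L}}\bigl(\cdots\bigr)$.

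After a shift of $L$ that aligns it with \eqref{e2}, the expected form is
$$A_L(q)=\frac{1}{(q;q^2)_{L}}\left(\frac{1-q^{2L}}{1-q^2}\,(\text{correction})+q^3\qbinom{L}{2}{q^2}\right),$$
or exactly the left side of \eqref{e2} with the index adjusted. For distinct partitions with parts at most $2L$, a pair of hooks of length two corresponds to two gaps of size at least two, or to the smallest part being at least two. Marking two such places gives something like $q\qbinom{2L}{2}{q}(-q^3;q)_{2L-2}$, matching the right side of \eqref{e2}.

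The second step is to match the indices carefully, which I expect is where care is needed. Theorem~\ref{t2} with $L$ replaced by $L+1$ reads
$$\frac{1}{(q;q^2)_{L}}\left(\frac{1-q^{2L}}{1-q^2}+q^3\qbinom{L}{2}{q^2}\right)\succeq q\qbinom{2L}{2}{q}(-q^3;q)_{2L-2}.$$
I would verify that the left side is exactly $A_L(q)$ and the right side is exactly $B_{2L}(q)$, using the formulas of \cite{BD}; this is plausibly the intended normalization. If instead the generating function of \cite{BD} carries extra terms, for instance a contribution from a lone hook at the first row, I would check that these terms only add nonnegative series to the odd side. That makes the inequality still follow.

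The case $L=1$ is covered, because Theorem~\ref{t2} holds for $L\geq2$ and we apply it at $L+1\geq2$. Comparing coefficients of $q^n$ then yields \eqref{e4}.

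The main obstacle is confirming the exact form of the hook-pair generating functions, that is, that the Berkovich–Dhar formulas specialize precisely to the two sides of \eqref{e2} at index $L+1$. If they do not, the fallback is to derive them directly. The counting uses $\binom{m}{2}$ as the number of ways to choose two hook-two cells, together with the structure described above.

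Failing a clean match, a direct combinatorial route is available. One builds an injection from pairs (distinct partition with parts at most $2L$, two marked hook-two cells) into pairs (odd partition with parts at most $2L-1$, two marked hook-two cells) via Glaisher's bijection. Splitting each part $2^k t$ with $t$ odd produces $t$'s bounded by $2L-1$, and the marked gaps are tracked to marked cells among the resulting odd part sizes.
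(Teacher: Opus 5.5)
Your strategy of reading \eqref{e4} off the Berkovich--Dhar generating functions and invoking Theorem~\ref{t2} is the route of the paper's analytic proof. However, the identification your plan rests on is false, and so is the shift $L\to L+1$.

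\textbf{Normalization.} A pair of hook-two cells forces weight at least $5$ in an odd partition (e.g.\ $(3,1,1)$) and at least $6$ in a distinct partition (e.g.\ $(4,2)$). The two sides of \eqref{e2} at index $L+1$ instead begin with $q^0$ and $q^1$. Already for $L=1$, your $A_1(q)$ and $B_2(q)$ are both $0$, since odd parts at most $1$, or distinct parts at most $2$, allow at most one hook of length two. They are not $1/(1-q)$ and $q$.

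\textbf{What is actually true.} On the distinct side, $B_{2L}(q)=q^6\qbinom{2L-2}{2}{q}(-q^3;q)_{2L-4}$. This is $q^5$ times the right side of \eqref{e2} at index $L$ itself. On the odd side there is no exact match. Write $A_L(q)$ over $(q;q^2)_L$; its numerator contains, besides $q^5\frac{1-q^{2L-2}}{1-q^2}+q^8\qbinom{L-1}{2}{q^2}$, the mixed term $\sum_{2\le i<j\le L}(q^{2i+4j-3}+q^{4i+2j-3})$ (misprinted in \cite{BD}) and other nonnegative terms. Discarding these only gives the lower bound $A_L(q)\succeq\frac{q^5}{(q;q^2)_L}\bigl(\frac{1-q^{2L-2}}{1-q^2}+q^3\qbinom{L-1}{2}{q^2}\bigr)$. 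Here the denominator index $L$ and the bracket index $L-1$ do not match. The paper bridges this with $(q;q^2)_L^{-1}\succeq(q;q^2)_{L-1}^{-1}$, then applies Theorem~\ref{t2} at the same $L$, and handles $L=1$ separately.

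Your hedge about extra nonnegative terms cannot rescue the version with $L+1$. Multiplied by $q^5$, its right side is $B_{2L+2}(q)$, so it addresses the case $L+1$ of \eqref{e4}, not the case $L$. At $L=1$, its left side becomes $q^5/(1-q)$, which $A_1(q)=0$ does not dominate.

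\textbf{The combinatorial fallback.} It also fails as stated, because Glaisher's bijection does not carry hook pairs to hook pairs. For $L=2$, the partition $(4,2)$ has two hooks of length two, but its image $(1^6)$ has only one. The paper's injection is a composition of three maps:
\begin{enumerate}
\item Strip the two marked hooks from the distinct partition and extract two parts of the result to encode integers $0\le a\le b\le 2L-4$. This gives $(\mu,a,b)$ with $\mu$ having distinct parts in $\{3,\ldots,2L-2\}$ and $|\mu|+1+a+b=n-5$.
\item Apply the injection of Theorem~\ref{t2}, which uses Glaisher's bijection on $\mu\cup(1^e,2^f)$.
\item Adjoin $(1^2,2k+3)$ or $(2c+3,2d+5)$ to the resulting odd partition, and mark explicitly chosen cells of hook length two.
\end{enumerate}
The weight shift by $5$ here is the same normalization missing from your generating-function plan.
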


Theorem~\ref{t5} follows from Theorem~\ref{t2}
by comparing the generating functions for the two weighted counts.
This is analogous to the implication between the two conjectures
noted in \cite[Section~4]{BD}.  The combinatorial proof uses the
injection of Theorem~\ref{t2} to construct an injection
between partitions with marked hook pairs.

\begin{corollary}\label{c6}
Conjecture~\ref{c4} holds.
\end{corollary}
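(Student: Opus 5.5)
The plan is to deduce Corollary~\ref{c6} directly from Theorem~\ref{t5}. The only extra ingredient is that the right-hand side of \eqref{e4} can only grow when the bound on the distinct parts is relaxed from $L$ to $2L$.

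First I would check that the right-hand side of Conjecture~\ref{c4} is bounded above by the right-hand side of \eqref{e4}. Write
$$
 \sum_{m\geq0}\binom{m}{2}b_2(L,m,n)=\sum_{\pi}\binom{h_2(\pi)}{2},
$$
where $\pi$ runs over the partitions of $n$ into distinct parts at most $L$, and $h_2(\pi)$ is the number of cells of hook length two in the Young diagram of $\pi$.

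The key observation is that $h_2(\pi)$ is intrinsic to the partition $\pi$. Hook lengths are computed from the Young diagram alone and do not depend on which class of partitions we regard $\pi$ as belonging to. Since $L\leq 2L$, every partition of $n$ into distinct parts at most $L$ is also a partition of $n$ into distinct parts at most $2L$. Hence the index set of the sum above is contained in the index set of the corresponding sum for $b_2(2L,m,n)$, and the summands agree on the common part. Equivalently, $b_2(L,m,n)\leq b_2(2L,m,n)$ for every $m$ and $n$. Every summand $\binom{h_2(\pi)}{2}$ is nonnegative, so
$$
 \sum_{m\geq0}\binom{m}{2}b_2(L,m,n)\ \leq\ \sum_{m\geq0}\binom{m}{2}b_2(2L,m,n).
$$

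Next I would combine this with Theorem~\ref{t5}, which is valid for all $L\geq1$ and $n\geq0$, exactly the range in Conjecture~\ref{c4}. Chaining the two inequalities gives
$$
 \sum_{m\geq0}\binom{m}{2}a_2(L,m,n)\ \geq\ \sum_{m\geq0}\binom{m}{2}b_2(2L,m,n)\ \geq\ \sum_{m\geq0}\binom{m}{2}b_2(L,m,n),
$$
which is Conjecture~\ref{c4}.

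There is no real obstacle here once Theorem~\ref{t5} is available; all the substance lies in that theorem. The only point requiring care is the monotonicity step, namely that enlarging the set of allowed partitions cannot decrease a sum of nonnegative weights. This needs nonnegativity of $\binom{m}{2}$ and the fact that the hook statistic does not depend on the ambient bound. Both are immediate from the definitions.
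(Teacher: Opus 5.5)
Your proposal is correct and matches the paper's argument: the paper also deduces Corollary~\ref{c6} from Theorem~\ref{t5} using $b_2(L,m,n)\leq b_2(2L,m,n)$, which holds because distinct partitions with parts at most $L$ are among those with parts at most $2L$ and the number of hooks of length two depends only on the partition. Your remark that $\binom{m}{2}\geq0$ makes explicit a step the paper leaves implicit.
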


Corollary~\ref{c6} follows from Theorem~\ref{t5}, since
$b_2(L,m,n)\leq b_2(2L,m,n)$ for every $m,n$.
For a common bound on the largest part,
Theorem~\ref{t5} gives, for every $L\geq1$,
$$
 \sum_{m\geq0}\binom{m}{2}
 a_2\!\left(\left\lceil\frac L2\right\rceil,m,n\right)
 \geq\sum_{m\geq0}\binom{m}{2}b_2(L,m,n).
$$
Indeed, the largest permitted odd part is $L-1$ when $L$ is even
and $L$ when $L$ is odd; in the latter case we restrict the distinct
bound from $L+1$ to $L$.  Thus both sides count pairs of hooks of
length two in partitions with largest part at most $L$.

The rest of the paper is organized as follows.  We give
combinatorial proofs in Section~\ref{s2} and
analytic proofs in Section~\ref{s3}.

\section{Combinatorial proofs}\label{s2}

\begin{proof}[Proof of Theorem~\ref{t2}]
We interpret both sides of \eqref{e2} combinatorially.  The standard Gaussian interpretation
(see example \cite[Section~2.2.5]{Pak}) gives
$$
 \qbinom{2L-2}{2}{q}
   =\sum_{0\leq a\leq b\leq2L-4}q^{a+b}.
$$
Thus the coefficient of $q^n$ on the right counts triples
$(\mu,a,b)$, where $\mu$ has distinct parts in
$\{3,\ldots,2L-2\}$, $0\leq a\leq b\leq2L-4$, and
\begin{equation}\label{wt}
 |\mu|+1+a+b=n.
\end{equation}
On the left, $\pi$ is an odd partition with largest part at most
$2L-3$.  The coefficient of $q^n$ counts the disjoint union of
\begin{enumerate}
\item pairs $(\pi,i)$ with $0\leq i\leq L-2$ and $|\pi|+2i=n$;
\item triples $(\pi,c,d)$ with $0\leq c\leq d\leq L-3$ and
      $|\pi|+3+2c+2d=n$.
\end{enumerate}
The sums over the integer parameters are
$$
 \sum_{i=0}^{L-2}q^{2i}=\frac{1-q^{2L-2}}{1-q^2},\qquad
 \sum_{0\leq c\leq d\leq L-3}q^{3+2c+2d}
       =q^3\qbinom{L-1}{2}{q^2}.
$$
The second class is empty for $L=2$.

Let $g$ denote Glaisher's bijection from distinct partitions to odd
partitions \cite[Section~3.2.1]{Pak}.  It replaces each part $2^su$,
with $u$ odd, by $2^s$ copies of $u$, and preserves the sum of the
parts.  Here $\mu\cup(1^e,2^f)$ denotes the partition obtained by
adjoining $e$ ones and $f$ twos to $\mu$.
We define $\varphi$ by the following three cases.

\emph{Case I: $a=0$ and $b=2i$.}
$$
 \varphi(\mu,0,2i)=\bigl(g(\mu\cup(1)),i\bigr).
$$
Here $0\leq i\leq L-2$, and the weight of the image is
$|\mu|+1+2i$.

\emph{Case II: $a=0$ and $b=2i+1$.}
$$
 \varphi(\mu,0,2i+1)=\bigl(g(\mu\cup(2)),i\bigr).
$$
Here $0\leq i\leq L-3$, and the weight of the image is
$|\mu|+2+2i$.

\emph{Case III: $a>0$.}  There are unique integers $c,d$ and
$e,f\in\{0,1\}$ such that
\begin{equation}\label{par}
 a=2c+1+f,\qquad b=2d+1+f+e.
\end{equation}
The parity of $a-1$ determines $f$, and the parity of $b-a$
determines $e$.  The inequalities $a\leq b\leq2L-4$ give
$0\leq c\leq d\leq L-3$.  In this case,
$$
 \varphi(\mu,a,b)=\bigl(g(\mu\cup(1^e,2^f)),c,d\bigr).
$$
Its weight is
$$
 |\mu|+e+2f+3+2c+2d=|\mu|+1+a+b.
$$

In each case, the partition to which $g$ is applied has distinct
parts at most $2L-2$.  Its image therefore has odd parts at most
$2L-3$.  Thus $\varphi$ maps the triples counted on the right of
\eqref{e2} to the stated classes on the left, preserving
weight.

We now show that $\varphi$ is injective.  Since $g$ is one-to-one
and $\mu$ has no parts $1$ or $2$, an image in the first class
uniquely determines $\mu$ and whether Case~I or Case~II applies;
the integer $i$ then determines $a,b$.  An image in the
second class uniquely determines $\mu,e,f$, and its integers
$c,d$ determine $a,b$ by \eqref{par}.  Hence distinct triples
have distinct images, proving \eqref{e2}.
\end{proof}

\begin{proof}[Proof of Corollary~\ref{c3}]
Let $L\geq4$.  The right side of \eqref{e1} counts
triples $(\mu,a,b)$ of weight $|\mu|+1+a+b$, where $\mu$ has
distinct parts in $\{3,\ldots,L-2\}$ and
$0\leq a\leq b\leq L-4$.  We apply the map $\varphi$ defined in the proof of
Theorem~\ref{t2} to these triples, with $L$
replaced by $L-1$.  Its images have odd parts at most $2L-5$, together with
an integer $0\leq i\leq L-3$ or two integers
$0\leq c\leq d\leq L-4$.

The left side of \eqref{e1} counts pairs $(\pi,i)$
and triples $(\pi,c,d)$ with the same respective weights, where
$\pi$ has odd parts at most $2L-1$ and
$$
 0\leq i\leq L-2,\qquad 0\leq c\leq d\leq L-4.
$$
These classes contain all the preceding images.  The restricted
map is therefore a weight-preserving injection proving
\eqref{e1}.
\end{proof}

\begin{proof}[Proof of Theorem~\ref{t5}]
Let $\mathcal O$ be the set of pairs $(\pi,S)$ in which $\pi$
is a partition of $n$ into odd parts at most $2L-1$ and $S$ is
a two-element set of cells of hook length two in $\pi$.
Define $\mathcal D$ in the same way, with $\pi$ a partition of
$n$ into distinct parts at most $2L$.  Then
$$
 \#\mathcal O=\sum_{m\geq0}\binom{m}{2}a_2(L,m,n),\qquad
 \#\mathcal D=\sum_{m\geq0}\binom{m}{2}b_2(2L,m,n).
$$
For $L=1$, both sets are empty.  Suppose that $L\geq2$.

Let $\mathcal T$ consist of triples $(\mu,a,b)$, where $\mu$ is
a partition into distinct parts in $\{3,\ldots,2L-2\}$ and
$a,b$ are integers satisfying
$$
 0\leq a\leq b\leq2L-4,\qquad |\mu|+1+a+b=n-5.
$$
Let $\mathcal U$ be the disjoint union of the two classes
$$
 \begin{array}{ll}
 (\pi,k),&0\leq k\leq L-2,\quad |\pi|+2k=n-5,\\[2pt]
 (\pi,c,d),&0\leq c\leq d\leq L-3,\quad
                 |\pi|+3+2c+2d=n-5,
 \end{array}
$$
where $\pi$ is a partition into odd parts at most $2L-3$ and
$k,c,d$ are integers.  The required injection will be the composition
$$
 \mathcal D\xrightarrow{\alpha}\mathcal T
 \xrightarrow{\varphi}\mathcal U\xrightarrow{\psi}\mathcal O.
$$

For the first map, $\forall(\pi_d,S)\in\mathcal D$, write
$\pi_d=(\lambda_1,\ldots,\lambda_r)$, with
$2L\geq\lambda_1>\cdots>\lambda_r\geq1$.
By \cite[Section~3.2]{BD}, every hook of length two in $\pi_d$
is horizontal and is uniquely determined by its row.
Let $i<j$ be the row indices of the two cells in $S$, counted
from top to bottom.  With $\lambda_{r+1}=0$, we have
$$
 \lambda_i-\lambda_{i+1}\geq2,\qquad
 \lambda_j-\lambda_{j+1}\geq2.
$$
Define $\delta=(\delta_1,\ldots,\delta_r)$ by
$$
 \delta_h=
 \begin{cases}
  \lambda_h-2,&1\leq h\leq i,\\
  \lambda_h-1,&i<h\leq j,\\
  \lambda_h,&j<h\leq r.
 \end{cases}
$$
The gap conditions give
$$
 2L-2\geq\delta_1>\cdots>\delta_r\geq1,\qquad
 |\delta|=n-i-j.
$$
Let $\mu$ be the sequence obtained from $\delta$ by deleting
$\delta_{r-j+1}$ and $\delta_{r-i+1}$, keeping the parts above
both deleted parts unchanged, and increasing those between
them by $1$ and those below both by $2$.
We define $\alpha(\pi_d,S)=(\mu,a,b)$ by
\begin{align}
 a&=\delta_{r-i+1}-1,\qquad
 b=\delta_{r-j+1}-2,\label{map}\\*
 \mu&=\bigl(\delta_1,\ldots,\delta_{r-j},
       \delta_{r-j+2}+1,\ldots,\delta_{r-i}+1,\delta_{r-i+2}+2,\ldots,\delta_r+2\bigr),\notag
\end{align}
where empty blocks are omitted.
Since $1\leq\delta_{r-i+1}<\delta_{r-j+1}\leq2L-2$,
we have $0\leq a\leq b\leq2L-4$.
The three blocks are strictly decreasing, with parts in
$[b+3,2L-2]$, $[a+3,b+2]$, and $[3,a+2]$, respectively.
These intervals are disjoint and occur in decreasing order.
Thus $\mu$ is a partition into distinct parts
in $\{3,\ldots,2L-2\}$.  Moreover,
\begin{align}
 |\mu|&=|\delta|-(a+1)-(b+2)+(j-i-1)+2(i-1),\label{enc}\\
 |\mu|+1+a+b&=|\delta|+i+j-5=n-5.\notag
\end{align}
Hence $\alpha$ maps $\mathcal D$ into $\mathcal T$.
If two elements of $\mathcal D$ have the same image $(\mu,a,b)$,
then their omitted parts of $\delta$ are $a+1$ and $b+2$.
The three disjoint intervals determine the block in \eqref{map}
containing each part of $\mu$ and hence its corresponding part
of $\delta$.  Thus their partitions $\delta$ agree.
The positions of $a+1$ and $b+2$ are $r-i+1$ and $r-j+1$,
respectively, so $i,j$ also agree.  The definition of $\delta$
then determines $\pi_d$ and the two cells of $S$.
Therefore $\alpha$ is injective.

The second map $\varphi:\mathcal T\to\mathcal U$ is precisely
the injection constructed in the proof of Theorem~\ref{t2},
with the same $L$ and weight $n-5$ in \eqref{wt}.

It remains to define an injection $\psi:\mathcal U\to\mathcal O$.
For an odd partition $\rho$, let $H_s(\rho)$ denote the penultimate
cell in its bottommost row of length $s>1$, and let $V_1(\rho)$
denote the cell in its second-bottommost row of length one, whenever
these cells exist.  Their hooks are horizontal and vertical,
respectively, and have length two \cite[Section~3.1]{BD}.
For the pairs and triples in $\mathcal U$, define
$$
\begin{aligned}
 \psi(\pi,k)&=\bigl(\rho,\{V_1(\rho),H_{2k+3}(\rho)\}\bigr),
 &\rho&=\pi\cup(1^2,2k+3),\\
 \psi(\pi,c,d)&=\bigl(\rho,\{H_{2c+3}(\rho),H_{2d+5}(\rho)\}\bigr),
 &\rho&=\pi\cup(2c+3,2d+5).
\end{aligned}
$$
The adjoined parts ensure that the indicated cells exist.
The bounds defining $\mathcal U$ give
$$
 3\leq2k+3\leq2L-1,\qquad
 3\leq2c+3<2d+5\leq2L-1,
$$
so $\rho$ has odd parts at most $2L-1$ and two distinct marked
cells of hook length two.  In the two cases, respectively,
$$
 |\rho|=|\pi|+2+(2k+3)=n,\qquad
 |\rho|=|\pi|+(2c+3)+(2d+5)=n.
$$
Thus $\psi$ maps $\mathcal U$ into $\mathcal O$.
The two classes have disjoint images: the first has a vertical
and a horizontal marked hook, whereas the second has two
horizontal marked hooks.  Within either class, the marked row
lengths determine $k$ or $c,d$.  Equal images therefore have
the same adjoined parts and the same $\pi$, proving that
$\psi$ is injective.

The composition $\psi\circ\varphi\circ\alpha$ is an injection
from $\mathcal D$ to $\mathcal O$.  Hence
$\#\mathcal D\leq\#\mathcal O$, proving \eqref{e4}.
\end{proof}

\section{Analytic proofs}\label{s3}

Gaussian polynomials have nonnegative coefficients and are
coefficientwise nondecreasing in their upper index.  These properties
follow from their initial values and the standard recurrence obtained
from the finite $q$-binomial
theorem \cite[Equation~(3.3.6)]{Andrews}
\begin{equation}\label{rec}
 \qbinom{m}{n}{q}
 =\qbinom{m-1}{n}{q}
     +q^{m-n}\qbinom{m-1}{n-1}{q}
 \qquad(1\leq n\leq m).
\end{equation}
All products used to multiply an inequality below have
nonnegative coefficients.

\begin{proof}[Proof of Theorem~\ref{t2}]
For $L>2$, substitution of the product formulas for the Gaussian
polynomials gives
\begin{align}
 &(1+q)(1+q^2)
 \left(\frac{1-q^{2L-2}}{1-q^2}
             +q^3\qbinom{L-1}{2}{q^2}\right)
       -q\qbinom{2L-2}{2}{q} \label{pos}\\
 &=\frac{1-q^{2L-2}}{1-q^2}
 \left((1+q)(1+q^2)
 +\frac{q^3(1-q^{2L-4})-q(1-q^{2L-3})}{1-q}\right)\notag\\
 &=\frac{1-q^{2L-2}}{1-q^2}(1+q^3+q^{2L-2})\succeq0.\notag
\end{align}
At $L=2$, both $\qbinom{L-1}{2}{q^2}$ and $1-q^{2L-4}$ vanish,
so the identity remains valid.
Cancellation in the finite products also gives
\begin{equation}\label{prod}
 \frac1{(q;q^2)_{L-1}}
 =\frac{(-q;q)_{2L-2}}{(q^{2L};q^2)_{L-1}}
 \succeq(-q;q)_{2L-2},
\end{equation}
since $(q^{2L};q^2)_{L-1}^{-1}$ has nonnegative coefficients
and constant term one.
Using \eqref{pos}, \eqref{prod}, and
$(-q;q)_{2L-2}=(1+q)(1+q^2)(-q^3;q)_{2L-4}$, we obtain
\begin{align}
 &\frac1{(q;q^2)_{L-1}}
 \left(\frac{1-q^{2L-2}}{1-q^2}
              +q^3\qbinom{L-1}{2}{q^2}\right)\\
 &\quad\succeq(-q;q)_{2L-2}
 \left(\frac{1-q^{2L-2}}{1-q^2}
              +q^3\qbinom{L-1}{2}{q^2}\right)\notag\\
 &\quad\succeq q\qbinom{2L-2}{2}{q}(-q^3;q)_{2L-4}.\notag
\end{align}
This proves \eqref{e2}.
\end{proof}

\begin{proof}[Proof of Corollary~\ref{c3}]
Let $L\geq4$.  Applying Theorem~\ref{t2} with
$L$ replaced by $L-1$, we obtain
\begin{align*}
 &\frac1{(q;q^2)_L}
 \left(\frac{1-q^{2L-2}}{1-q^2}
                +q^3\qbinom{L-2}{2}{q^2}\right)\\
 &\quad\succeq\frac1{(q;q^2)_{L-2}}
 \left(\frac{1-q^{2L-4}}{1-q^2}
                +q^3\qbinom{L-2}{2}{q^2}\right)\\
 &\quad\succeq q\qbinom{2L-4}{2}{q}(-q^3;q)_{2L-6}\\
 &\quad\succeq q\qbinom{L-2}{2}{q}(-q^3;q)_{L-4}.
\end{align*}
For the first inequality, we use
$$
 \frac{1-q^{2L-2}}{1-q^2}
 =\frac{1-q^{2L-4}}{1-q^2}+q^{2L-4},\qquad
 \frac{(q;q^2)_{L-2}}{(q;q^2)_L}
 =\frac1{(1-q^{2L-3})(1-q^{2L-1})}\succeq1.
$$
For the last, \eqref{rec} gives
$\qbinom{2L-4}{2}{q}\succeq\qbinom{L-2}{2}{q}$, and
$$
 \frac{(-q^3;q)_{2L-6}}{(-q^3;q)_{L-4}}
 =(-q^{L-1};q)_{L-2}\succeq1.
$$
\end{proof}

\begin{proof}[Proof of Theorem~\ref{t5}]
For $L=1$, both sides of \eqref{e4} vanish.  Let $L\geq2$.
We use \cite[Theorem~2.4]{BD}, with a minor typographical correction
in their Equation~(2.3): the last $q^{2L}$ in the mixed term should read
$q^{2L-2}$.  Indeed, substituting $z=1+t$ in
\cite[Equation~(3.1)]{BD} and extracting the coefficient of $t^2$
gives the mixed term below, whose equalities follow by geometric
summation:
\begin{align*}
 &\sum_{2\leq i<j\leq L}
       \bigl(q^{2i+4j-3}+q^{4i+2j-3}\bigr)\\
 &\quad=q^9\left(
   \frac{(1-q^{2L-2})(1-q^{4L-4})}{(1-q^2)(1-q^4)}
       -\frac{1-q^{6L-6}}{1-q^6}\right)\\
 &\quad=q^{11}\frac{1-q^2}{1-q^{2L}}\qbinom{L}{3}{q^2}
 \bigl((1+q^2)(1+q^{2L-2})+q^2+q^{2L-2}\bigr)\succeq0.
\end{align*}
This gives the nonnegativity asserted in \cite[Remark~3]{BD}
with the same correction.  Keeping the first and third terms in
the corrected version of \cite[Equation~(2.3)]{BD}, we obtain
\begin{equation}\label{odd}
 \sum_{n,m\geq0}\binom{m}{2}a_2(L,m,n)q^n
 \succeq\frac{q^5}{(q;q^2)_L}
 \left(\frac{1-q^{2L-2}}{1-q^2}
                +q^3\qbinom{L-1}{2}{q^2}\right).
\end{equation}
On the distinct side, \cite[Theorem~2.5]{BD}, with $L$ replaced
by $2L$, gives
\begin{equation}\label{dist}
 \sum_{n,m\geq0}\binom{m}{2}b_2(2L,m,n)q^n
   =q^6\qbinom{2L-2}{2}{q}(-q^3;q)_{2L-4}.
\end{equation}
Since $(q;q^2)_L^{-1}\succeq(q;q^2)_{L-1}^{-1}$,
\eqref{odd} and Theorem~\ref{t2} yield
\begin{align*}
 \sum_{n,m\geq0}\binom{m}{2}a_2(L,m,n)q^n
 &\succeq\frac{q^5}{(q;q^2)_{L-1}}
 \left(\frac{1-q^{2L-2}}{1-q^2}
                +q^3\qbinom{L-1}{2}{q^2}\right)\\
 &\succeq q^6\qbinom{2L-2}{2}{q}(-q^3;q)_{2L-4}.
\end{align*}
Together with \eqref{dist}, this proves \eqref{e4}
by comparison of coefficients.
\end{proof}

\end{document}